\newtheorem{theorem}{Theorem}[section]
\newtheorem{proposition}[theorem]{Proposition}
 \newtheorem{remark}[theorem]{Remark}
 \newtheorem{problem}[theorem]{Problem}
\def\R{\mathbb{R}}
\def\Z{\mathbb{Z}}
\def\L{\mathcal{L}}
\def\br{\mathbf{r}}
\def\bn{\mathbf{n}}
\def\bx{\mathbf{x}}
\def\by{\mathbf{y}}
\def\ba{\mathbf{a}}
\def\bb{\mathbf{b}}
\def\bc{\mathbf{c}}
\title{Maximum $k$-sum $\bn$-free sets of the 2-dimensional integer lattice}
\author{
Ilkyoo Choi\thanks{
Ilkyoo Choi was supported by the National Research Foundation of Korea (NRF) grant funded by the Korea government (MSIP) (NRF-2018R1D1A1B07043049) and also by Hankuk University of Foreign Studies Research Fund.
Department of Mathematics, Hankuk University of Foreign Studies, Yongin-si, Gyeonggi-do, Republic of Korea.
E-mail: \texttt{ilkyoo@hufs.ac.kr}
}
\and
Ringi Kim\thanks{Ringi Kim was  supported by the National Research Foundation of Korea (NRF) grant funded by the Korea government (MSIP) (NRF-2018R1C1B6003786).
Department of Mathematical Sciences, KAIST, Daejeon, Republic of Korea.
E-mail: \texttt{ringikim2@gmail.com}
}
\and
Boram Park\thanks
{Boram Park was supported by Basic Science Research Program through the National Research Foundation of Korea (NRF) funded by the Ministry of Science, ICT and Future Planning (NRF-2018R1C1B6003577).
Department of Mathematics, Ajou University, Suwon-si, Gyeonggi-do, Republic of Korea.
E-mail: \texttt{borampark@ajou.ac.kr}
}
}
\begin{document}
\maketitle

\begin{abstract}
For a positive integer $n$, let $[n]$ denote $\{1, \ldots, n\}$.
For a 2-dimensional integer lattice point $\bb$ and positive integers $k\geq 2$ and $n$,
a \textit{$k$-sum $\mathbf{b}$-free set} of $[n]\times [n]$ is a subset $S$ of $[n]\times [n]$ such that
there are no elements ${\ba}_1, \ldots, {\ba}_k$ in $S$ satisfying ${\ba}_1+\cdots+{\ba}_k =\mathbf{b}$.
For a 2-dimensional integer lattice point $\bb$ and positive integers $k\geq 2$ and $n$, we determine the maximum density of a {$k$-sum $\bb$-free set} of $[n]\times [n]$.
This is the first investigation of the non-homogeneous sum-free set problem in higher dimensions.
\end{abstract}

\section{Introduction}\label{sec-intro}
Let $\Z_{>0}$ and $\R_{>0}$ denote the sets of positive integers and positive real numbers, respectively.
For a positive integer $n$, let $[n]=\{1,\ldots,n\}$.
Throughout this paper, a bold letter such as $\bn, \bx$, and $\by$ stands for a single vector in $\R_{>0}^d$ for some integer $d\geq 2$.
For a positive integer $d$ and a $d$-dimensional integer lattice point $\bn=(n_1,\ldots,n_d)\in\Z_{>0}^d$, let $[\bn]$ denote the set $[n_1]\times\cdots\times[n_d]$ and let $|\bn|=n_1\cdots n_d$.

For an abelian  group ($G$, $+$), a set $S \subseteq G$ is \textit{sum-free} if there are no elements $x, y, z$ in $S$ satisfying $x + y = z$.
Sum-free sets were investigated by Schur~\cite{schur1917} in 1917 as an attempt to prove  Fermat's Last Theorem.
Ever since, sum-free sets received a significant amount of attention over the years, aiding the growth of the field of additive combinatorics.
In particular, understanding sum-free subsets of the additive group on the positive integers has been considered an important topic in the area.
Given a set $S$, two natural questions arise: the maximum size of a sum-free subset of $S$ and the number of sum-free subsets of $S$.
It is easy to see that a sum-free subset of $[n]$ has size at most $\left\lceil \frac{n}{2}\right\rceil$, which is tight as demonstrated by taking all integers of $[n]$ that are either odd or greater than $\left\lfloor \frac{n}{2}\right\rfloor$.
Conjectures by Cameron and Erd\"{o}s~\cite{CE1990,CE1999} concerning the number of sum-free subsets or maximal sum-free subsets of $[n]$ were settled in \cite{balogh2018,Green2004,Sapo2003}.
Other structural aspects of a sum-free subset of $[n]$ were also studied in \cite{F1992,DFST1999,Tran2018}.

There is a vast literature on generalizations and variations of sum-free subsets of $[n]$.
Among them, we emphasize the following two directions.
The first is by  Ruzsa~\cite{Ruz93,Ruz95}, who generalized the above classical problem to linear equations.
For a positive integer $k\geq 2$ and integers $a_1,\ldots,a_k,b$,  let  $\mathcal{L}:a_1x_1+\cdots+a_kx_k=b$ be a linear equation.
An $\L$-\textit{solution-free set} (or $\L$-\textit{free set} for short)  is a subset $S$ of $[n]$ such that no elements $x_1,\ldots,x_{k}$ in $S$ satisfy the equation $\L$.
The case when $b=0$, which is also referred to as ``$\mathcal{L}$ is homogeneous'', was actively studied due to its close ties to other subjects such as Sidon sets, progression-free sets, and Rado's boundedness conjecture.
See \cite{H2017, HE2007,H2017II} for recent results on $\L$-free sets where $\L$ is a homogeneous linear equation, and see \cite{Rado1933,FK2006} for details regarding Rado's boundedness conjecture.
Also, the complexity of finding a maximum $\L$-free set is known to be NP-complete in almost all cases, see~\cite{meeks2018,complexity2018} for recent results.

The second is a direction in \cite{C2005}, which generalizes the problem to finding a sum-free subset of the $d$-dimensional integer lattice $\Z_{>0}^d$.
To be precise, for a $d$-dimensional integer lattice point $\bn\in\Z_{>0}^d$, a \textit{sum-free set} of $[\bn]$ is a subset $S$ of $[\bn]$ such that there are no elements ${\ba}_1, \ba_2, {\ba}_3$ in $S$ satisfying ${\ba}_1+{\ba}_2 ={\ba}_{3}$.
Regarding the question of the maximum density of a sum-free subset of $[\bn]$,
Cameron~\cite{C2002} and Katz~\cite{Katz2009} provided some partial results, and
Elsholtz and Rackham~\cite{ER2017} resolved the 2-dimensional case as follows.

\begin{theorem}[\cite{ER2017}]\label{thm:lattice}
As $n$ goes to infinity,
 the density of a sum-free subset of $[n]\times [n]$ is at most $\frac{3}{5}+O\left(\frac{1}{n}\right)$.
\end{theorem}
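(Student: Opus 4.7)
Let $S \subseteq [n]\times[n]$ be sum-free. My strategy is to slice $S$ along anti-diagonals and reduce the problem to a one-dimensional weighted sum-free question. For each $s \in \{2,\ldots,2n\}$, let $X_s=\{x : (x,s-x)\in S\}$, a subset of the interval $I_s=[\max(1,s-n),\min(n,s-1)]$ of length $\ell_s$. Then $|S|=\sum_{s}|X_s|$ and $\sum_{s}\ell_s=n^2$. Sum-freeness of $S$ is equivalent to the cross-diagonal constraints
\[
(X_{s_1}+X_{s_2})\cap X_{s_1+s_2}=\emptyset \qquad \text{whenever } s_1+s_2\le 2n,
\]
which, combined with the integer Cauchy--Davenport inequality $|X_{s_1}+X_{s_2}|\ge |X_{s_1}|+|X_{s_2}|-1$ and the trivial bound $|X_s|\le \ell_s$, yield a linear-programming relaxation whose optimum I will show to be at most $\frac{3}{5}n^2+O(n)$.

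The extremal vector, identified from the construction $S^{\ast}=\{(x,y)\in[n]^2: \frac{4n}{5}<x+y\le \frac{8n}{5}\}$, takes $X_s=I_s$ for $s\in(\frac{4n}{5},\frac{8n}{5}]$ and $X_s=\emptyset$ otherwise. Here $S^{\ast}$ is sum-free because $s_1,s_2>\frac{4n}{5}$ forces either $s_1+s_2>\frac{8n}{5}$ or $s_1+s_2>2n$, and a direct area computation gives $|S^{\ast}|=\frac{3}{5}n^2+O(n)$, both matching the target bound and providing the relevant guess for the LP-dual weights.

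For the matching upper bound, I would stratify the diagonals into a \emph{small} range ($s\le\frac{4n}{5}$), a \emph{middle} range ($\frac{4n}{5}<s\le\frac{8n}{5}$), and a \emph{large} range ($s>\frac{8n}{5}$), and use the cross-diagonal constraints to show that mass in the small and large ranges forces compensating losses in the middle range. Concretely, pairs $(s_1,s_2)$ of small indices produce sumsets that land in the middle range, while pairs of one small and one middle index produce sumsets landing in the large range; iterating these trade-offs with appropriately chosen dual weights collapses $\sum_s|X_s|$ to the LP optimum. The main obstacle is selecting the dual weights so that the aggregated inequalities are tight at the extremal vector $X^{\ast}$: any constant-factor slack here would only yield a weaker upper bound. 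The $O(1/n)$ error then absorbs both the boundary effects on short diagonals and the ``$-1$'' losses accumulated across many Cauchy--Davenport applications.
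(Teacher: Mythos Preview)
This theorem is not proved in the present paper at all: it is quoted from Elsholtz--Rackham as prior work, with no argument supplied. So there is no in-paper proof to compare your plan against.

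On its own merits, your plan has a genuine gap at the Cauchy--Davenport step. From $(X_{s_1}+X_{s_2})\cap X_{s_1+s_2}=\emptyset$ one only obtains
\[
|X_{s_1+s_2}|\ \le\ \ell_{s_1+s_2}-\bigl|(X_{s_1}+X_{s_2})\cap I_{s_1+s_2}\bigr|,
\]
and the sumset $X_{s_1}+X_{s_2}$ is in general \emph{not} contained in the interval $I_{s_1+s_2}$: as soon as $s_1+s_2>n+1$ it can spill past both endpoints (e.g.\ $s_1=s_2\approx 3n/4$ gives a sumset spread over $[2,\,3n/2-2]$ while $I_{3n/2}=[n/2,\,n]$). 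Cauchy--Davenport lower-bounds $|X_{s_1}+X_{s_2}|$, not the size of its intersection with $I_{s_1+s_2}$, so the linear inequality you intend to feed into your LP is not actually implied by sum-freeness. Without controlling this spillover, the relaxation is not valid.

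Even setting that aside, the proposal stops exactly where the difficulty begins. You correctly identify the extremal configuration and correctly flag ``selecting the dual weights'' as the main obstacle---and then do not select them. The entire content of the $3/5$ upper bound lies in producing such a certificate; the construction and the diagonal decomposition are the easy parts. As written there is no argument that your LP has value $\tfrac{3}{5}n^2+O(n)$ rather than something strictly larger, and the actual Elsholtz--Rackham proof is considerably more intricate than a single LP-duality step.
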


We initiate an investigation that lies at the intersection
of the two aforementioned research directions.
Namely, we consider the following problem:
 given a positive integer $n$ and a linear equation $\L$, find the maximum size of a subset of the integer lattice $[n]^d$ that does not contain a solution to $\L$.
This is the first investigation of the non-homogeneous sum-free set problem in higher dimensions.
To this extent,
we make the following definition:
 for a $d$-dimensional integer lattice point $\bb$ and positive integers $k>1$ and $n$,
 a \textit{$k$-sum $\bb$-free set} is a subset $S$ of $[n]^d$ such that
there are no elements ${\ba}_1, \ldots, {\ba}_k$ in $S$ satisfying ${\ba}_1+\cdots+{\ba}_k =\bb$.
For simplicity, let $\bn$ denote the $d$-dimensional vector  $(n,\ldots,n)$, and recall that $[\bn]=[n]^d$.
Let $\mu_{k,\bb}(\bn)$ denote the maximum size of a $k$-sum $\bb$-free set of $[\bn]$.
We are interested in finding the value of $\mu_{k,\bb}(\bn)$ where each coordinate of $\bn$ is a positive integer.
Note that we may further assume that each coordinate of $\bb$ is also a positive integer, as otherwise $\mu_{k,\bb}(\bn)=|\bn|=n^d$.

It turns out that our problem boils down to finding the value of $\mu_{k,\bn}(\bn)$.
This is because each coordinate of a point of $[\bn]$ is positive,
and hence if $n$ is sufficiently large so that $\bb\in [\bn]$, then
\[\mu_{k,\bb}(\bn)= n^d -|\bb| +\mu_{k,\bb}(\bb) \]
as one can see by taking all elements  $\bx=(x_1,\ldots,x_d) \in[\bn]$ such that $x_i$ is greater than the $i$th coordinate of $\bb$ for every $i$, and all elements of a maximum $k$-sum $\bb$-free subset of $[\bb]$.
Furthermore, the problem is easy when $k=2$, as we know
\[\mu_{2,\bn}(\bn)=n^d- \left\lceil\frac{(n-1)^d}{2}\right\rceil\]
by the following simple argument: vectors $\bx$ and $\bn-\bx$ cannot both be in a 2-sum $\bn$-free set for some $\bx\in[\bn]$, and equality can be obtained by taking all elements of $\{(x_1, \ldots,x_d)\in[\bn]\mid x_1+\cdots+x_d>\frac{dn}{2}\}$.

When $d=2$, we succeed in finding the maximum density of a $k$-sum $\bn$-free set of $[\bn]$ for every positive integer $k\ge 2$.
For brevity, let $\mu_{k}(\bn)$ denote $\mu_{k,{\bn}}(\bn)$, and define
\[\nu_k(\bn):=\frac{\mu_k(\bn)}{|\bn|}.\]

\begin{theorem}\label{thm:main}
Let $k\geq 2$ be a positive integer and let $\bn=(n,n)$.
As $n$ goes to infinity, 
\[\nu_k(\bn)=\frac{k^2-2}{k^2} + O\left(\frac{1}{n}\right).\]
\end{theorem}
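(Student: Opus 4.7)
The plan is to prove matching lower and upper bounds showing $\nu_k(\bn)=(k^2-2)/k^2+O(1/n)$.

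For the lower bound, I take the explicit set $S_0:=\{(x,y)\in[\bn]:x+y>\lfloor 2n/k\rfloor\}$. Verifying that $S_0$ is $k$-sum $\bn$-free is immediate by averaging: for any $\ba_1,\ldots,\ba_k\in S_0$ one has $\sum_{i=1}^{k}(a_i+b_i)>k\lfloor 2n/k\rfloor$, and a careful integer accounting forces this sum to strictly exceed $2n$, ruling out $\ba_1+\cdots+\ba_k=\bn$. A direct count of the omitted lower triangle yields $|S_0|=n^2-\binom{\lfloor 2n/k\rfloor}{2}=(k^2-2)n^2/k^2+O(n)$.

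For the upper bound I induct on $k$, with the base case $k=2$ supplied by the explicit formula for $\mu_2(\bn)$ in the introduction. For $k\ge 3$, let $S\subseteq[\bn]$ be $k$-sum $\bn$-free with $|S|=\mu_k(\bn)$. The first case is when $S$ contains some $\bx=(x,y)$ with $x,y\le\lfloor n/k\rfloor$: then $S$ is $(k-1)$-sum $(\bn-\bx)$-free, and the reduction identity $\mu_{k-1,\bn-\bx}([\bn])=n^2-(n-x)(n-y)+\mu_{k-1,\bn-\bx}(\bn-\bx)$ from the introduction, together with a rectangular strengthening of the induction hypothesis giving $\mu_{k-1,\bm}(\bm)\le\frac{(k-1)^2-2}{(k-1)^2}m_1m_2+O(m_1+m_2)$, yields $|S|\le n^2-\frac{2(n-x)(n-y)}{(k-1)^2}+O(n)$. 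Since $(n-x)(n-y)\ge((k-1)n/k)^2$ by the bound on $\bx$, this closes the induction in this case.

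The main obstacle is the complementary case $S\cap[\lfloor n/k\rfloor]^2=\emptyset$, where applying induction to any single point of $S$ is strictly too weak and a direct combinatorial argument is required. I decompose $[\bn]\setminus[\lfloor n/k\rfloor]^2=R_1\sqcup R_2\sqcup R_3$ with $R_1=(n/k,n]^2$ the upper corner and $R_2=[\lfloor n/k\rfloor]\times(n/k,n]$, $R_3=(n/k,n]\times[\lfloor n/k\rfloor]$ the two strips with exactly one coordinate at most $n/k$. No $k$-tuple drawn entirely from a single $R_i$ can sum to $\bn$ (for example in $R_1$, each coordinate exceeds $n/k$, forcing $\sum x_i>n$), so every forbidden tuple mixes at least two of the $R_i$. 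The principal tool is the injection $\phi(\bx):=\bn-(k-1)\bx$, arising from the forbidden configuration of $k-1$ copies of $\bx$ together with $\phi(\bx)$: on the bulk of $R_2\setminus S_0$ it takes values in $R_3\cap S_0$, and symmetrically for $R_3\setminus S_0\to R_2\cap S_0$. Near the boundary $\phi$ can fall outside $[\bn]$, so one supplements $\phi$ with forbidden configurations of other multiplicity patterns ($j\bx+(k-j)\by=\bn$ for several $j$) to cover the remaining points. Summing the resulting injection inequalities gives $|S\cap(R_2\cup R_3)|\le|(R_2\cup R_3)\cap S_0|+O(n)$, and since $R_1\subseteq S_0$, combining yields $|S|\le|S_0|+O(n)=(k^2-2)n^2/k^2+O(n)$.
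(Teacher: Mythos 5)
Your lower bound and the ``good'' case of your induction (when $S$ meets the lower-left box $[\lfloor n/k\rfloor]^2$) are sound, and the rectangular strengthening you mention is indeed necessary --- the paper's Theorem~\ref{thm:main:stronger} is stated for general $\bn=(n_1,n_2)$ for exactly this reason. The gap is in your complementary case, which is where all the difficulty of the problem lives. The map $\phi(\bx)=\bn-(k-1)\bx$ does not fail merely ``near the boundary'': for $\bx=(x,y)\in R_2\setminus S_0$ one needs $y\le (n-1)/(k-1)$ for $\phi(\bx)$ to land in $[\bn]$, while $y$ ranges up to roughly $2n/k-x$, so $\phi$ is undefined on a triangle with legs of length about $n(k-2)/(k(k-1))$, i.e.\ on a set of size $\Theta(n^2)$, for every $k\ge 3$. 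The proposed patches $j\bx+(k-j)\by=\bn$ do not repair this: solving for $\by$ requires $\bn-j\bx$ to be divisible by $k-j$ in each coordinate, which excludes a constant fraction of lattice points; where no exact solution exists one only gets conclusions of the form ``not both of $\by_1,\by_2$ lie in $S$,'' which is a fractional matching (losing a factor of $2$) rather than an injection; and nothing is said about why the images of the several partial maps are pairwise disjoint. Since the bound $(k^2-2)/k^2$ is exactly tight, there is no slack to absorb any of these losses.

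This is precisely why the paper's induction steps from $k$ to $k-2$ rather than to $k-1$: setting $\ba=2\lfloor\bn/k\rfloor$, either $S\cap[\ba]$ is $2$-sum $\ba$-free, in which case the trivial $k=2$ bound already removes enough points from that box, or two elements of $S$ summing to $\ba$ are consumed at once, reducing to a $(k-2)$-sum problem --- so your hard case never arises. The price is a second base case, $k=3$, which the paper handles by a genuinely different and delicate argument: it takes an extremal element $\bx$ of $S$ in the deficient region $A$, observes that $S$ must avoid a triangle cut off by the line through $\bx$ and $(2n_1/3,\,n_2/3)$, counts the lattice points of that triangle via Pick's theorem, and closes with the identity $(3\alpha+3\beta-2)^2\ge 0$. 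To salvage a step-by-one induction you would need an argument of comparable strength for the case $S\cap[\lfloor n/k\rfloor]^2=\emptyset$; as written, that case is asserted rather than proved.
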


Theorem~\ref{thm:main} is tight, as explained in Remark~\ref{rml:example}.
We suspect that the 1-dimensional version of Theorem~\ref{thm:main} is  already known, yet, we could not find any references.
As we use some ideas of the 1-dimensional case in the proof of the 2-dimensional case, we include the proof of the 1-dimensional case in Section~\ref{sec:1dim} for completeness.
We actually prove a stronger statement (Theorem~\ref{thm:main:stronger}) that implies Theorem~\ref{thm:main}, whose proof is in Section~\ref{sec:2dim}. 
We end the paper with some remarks and open questions in Section~\ref{sec:remarks}.

\section{The 1-dimensional case}\label{sec:1dim}

In this section, we provide the 1-dimensional analogue of Theorem~\ref{thm:main}.
As mentioned before, we suspect this result is known, yet, we include a proof for completeness.

\begin{proposition}
Let $k\geq 2$ be a positive integer and let $\bn=(n)$.
If $n$ is a positive integer, then 
\begin{eqnarray*}\label{eq:1-dim}
&&1-\frac{1}{k} \le \nu_{k}(\bn)\le 1-\frac{1}{k}+\frac{1}{n}.
\end{eqnarray*}
\end{proposition}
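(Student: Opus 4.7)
\medskip

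\noindent\textbf{Proof proposal.} The plan is to establish the stronger equality $\mu_k(\bn) = n - \lfloor n/k \rfloor$, from which the proposition follows via the elementary bounds $n/k - 1 \leq \lfloor n/k \rfloor \leq n/k$.

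For the lower bound, the natural construction is $S = \{\lfloor n/k \rfloor + 1, \ldots, n\}$. Any $k$-fold sum of elements of $S$ is at least $k(\lfloor n/k \rfloor + 1) > n$, so $S$ is $k$-sum $n$-free, which gives $\mu_k(\bn) \geq n - \lfloor n/k \rfloor \geq n(1 - 1/k)$.

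For the upper bound, the key insight is that a single small element $i \in S$ already forces another element out of $S$, since $i$ may be used with multiplicity in a $k$-sum. Concretely, I consider the map $\phi(i) = n - (k-1)i$ restricted to the ``small'' set $A = \{1, \ldots, \lfloor n/k \rfloor\}$. If $i \in A \cap S$, then the $k$ elements $\underbrace{i, \ldots, i}_{k-1}, \phi(i)$ sum to $n$, so $\phi(i) \notin S$. Since $\phi|_A$ is injective into $[n]$, writing $B = \phi(A)$, at most $|A| = \lfloor n/k \rfloor$ elements of $A \cup B$ can lie in $S$, provided $A \cap B = \emptyset$. A direct computation gives $\min B = n - (k-1)\lfloor n/k \rfloor \geq \lfloor n/k \rfloor = \max A$, so the two sets are disjoint except possibly at one point. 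Combining with $|[n] \setminus (A \cup B)| = n - 2\lfloor n/k \rfloor$ then yields $|S| \leq n - \lfloor n/k \rfloor$.

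The only delicate step is the coincidence $k \mid n$, where $\phi$ has a fixed point at $n/k$. Here $n/k$ itself cannot belong to $S$, because $k$ copies of $n/k$ already sum to $n$; deleting $n/k$ from both $A$ and $B$ restores disjointness and preserves the same counting bound. This boundary case is the only wrinkle I anticipate; the rest of the argument is clean pigeonhole.
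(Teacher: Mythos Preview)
Your argument is correct and in fact yields the exact value $\mu_k(n)=n-\lfloor n/k\rfloor$, which is slightly sharper than what the proposition asks for. The pairing $i\leftrightarrow \phi(i)=n-(k-1)i$ on $A=\{1,\dots,\lfloor n/k\rfloor\}$ works cleanly: for $i\in A$ one has $\phi(i)\ge \lfloor n/k\rfloor\ge 1$, the map is injective, and the boundary case $k\mid n$ is handled exactly as you describe since $n/k$ is a fixed point that cannot lie in $S$.

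The paper takes a different route: it proves the upper bound by induction on $k$, letting $m=\min S$ and observing that $S\cap[n-m]$ is a $(k-1)$-sum $(n-m)$-free set, then applying the induction hypothesis. Your approach is more elementary (no induction) and yields the precise extremal value rather than the density bound with a $+1/n$ slack. The paper's inductive reduction, on the other hand, mirrors the strategy used in the $2$-dimensional proof of Theorem~\ref{thm:main:stronger}, where one passes from $k$ to $k-2$ by locating elements of $S$ summing to a point near $\frac{2}{k}\bn$; so its virtue is methodological consistency with the main result rather than sharpness in dimension one.
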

\begin{proof}
As $\bn$ is a 1-dimensional vector, we will use $n$ to denote $\bn$.
As $\{ x\in [n] \mid x> \frac{n}{k}\}$ is a $k$-sum $n$-free set of $[n]$, we know
$\nu_k(n)\ge \frac{n-\left\lfloor\frac{n}{k}\right\rfloor}{n}\geq \frac{n-\frac{n}{k}}{n}=1-\frac{1}{k}$.
We prove the other inequality by induction on $k$.
When $k=2$, since $x$ and $n-x$ cannot both be in a $2$-sum $n$-free set for some $x\in [n]$, we know $\mu_2(n)=\left\lceil \frac{n}{2}\right\rceil$.
(Furthermore, this is tight as demonstrated by taking all integers of $[n]$ that are either odd or greater than $\left\lfloor \frac{n}{2}\right\rfloor$.)
Note that this implies $\nu_2(n) \leq \frac{1}{2}+\frac{1}{n}$.

Suppose $k\geq 3$.
Let $S$ be a $k$-sum $n$-free set and let $m$ be the minimum element of $S$.
If $m> \frac{n}{k}$, then {$|S|\leq n-\left\lfloor\frac{n}{k}\right\rfloor\le n-  \frac{n}{k} +1 $}, which implies the conclusion we seek.
So let us assume  $m\le \frac{n}{k}$. 
Since $m\in S$, we know 
$S$ is also a $(k-1)$-sum $(n-m)$-free set of $[n]$.
This further implies $S':=S\cap[n-m]$ is a $(k-1)$-sum $(n-m)$-free set of $[n-m]$.
By the induction hypothesis,
$\nu_{k-1}(n')\le 1 -\frac{1}{k-1} +\frac{1}{n'}$ for every positive integer $n'$, hence
\[\frac{ |S'|}{n-m}
\le  1-\frac{1}{k-1}+\frac{1}{n-m}.\]
Since $|S| \le  |S'|+m$, we have
\[ \frac{|S|}{n}
\le \frac{n-m-\frac{n-m}{k-1}+1+m}{n}
= 1-\frac{n-m}{n(k-1)}+\frac{1}{n}
\le 1-\frac{n-\frac{n}{k}}{n(k-1)}+\frac{1}{n}
=1-\frac{1}{k}+\frac{1}{n},\]
where the second inequality follows from the fact that $m\le \frac{n}{k}$.
Hence, \[ \frac{|S|}{n}\le 1-\frac{1}{k}+ \frac{1}{n}.\]
\end{proof}

\section{The 2-dimensional case}\label{sec:2dim}

In this section, we will prove the following statement, which is a stronger statement implying Theorem~\ref{thm:main}.

\begin{theorem}\label{thm:main:stronger}
Let $k\geq 2$ be a positive integer and let $\bn=(n_1,n_2)\in\Z_{>0}^2$.
As both $n_1$ and $n_2$ go to infinity,
 
\[\nu_k(\bn) = \frac{k^2-2}{k^2} + O\left(\frac{1}{\min\{n_1,n_2\}}\right).\]
\end{theorem}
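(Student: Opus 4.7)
The plan is to establish matching lower and upper bounds on $\mu_k(\bn)$, both accurate up to an additive error of $O(\max\{n_1,n_2\})$, which corresponds to the required $O(1/\min\{n_1,n_2\})$ multiplicative error in density.

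For the lower bound, I would exhibit the construction
\[
S^{*} := \bigl\{(x_1,x_2)\in [\bn] \;:\; n_2 x_1 + n_1 x_2 > \tfrac{2 n_1 n_2}{k}\bigr\}.
\]
If $\ba_1,\ldots,\ba_k \in S^{*}$ satisfied $\ba_1+\cdots+\ba_k=\bn$, then summing the defining strict inequality over $i$ would give $\sum_i (n_2 a_i^{(1)} + n_1 a_i^{(2)}) > 2 n_1 n_2$, while the sum condition forces the left-hand side to equal exactly $n_2 n_1 + n_1 n_2 = 2 n_1 n_2$, a contradiction; thus $S^{*}$ is $k$-sum $\bn$-free. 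The complement of $S^{*}$ in $[\bn]$ is the lattice triangle $\{n_2 x_1 + n_1 x_2 \le 2 n_1 n_2/k\}$, whose size is within $O(\max\{n_1,n_2\})$ of its continuous area $\tfrac{2n_1 n_2}{k^2}$ by a row-by-row count.

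For the upper bound I would induct on $k$, taking $k=2$ as the base case via the pairing $\bx\leftrightarrow \bn-\bx$, which gives $|S|\le \lceil(n_1-1)(n_2-1)/2\rceil+(n_1+n_2-1)$. For the inductive step, the key observation is that for any $\ba_0 \in S$ the set $S \cap [\bn-\ba_0]$ is $(k-1)$-sum $(\bn-\ba_0)$-free in $[\bn-\ba_0]$; combining the inductive hypothesis with the trivial bound $|S\setminus[\bn-\ba_0]|\le n_1 n_2-(n_1-a_0^{(1)})(n_2-a_0^{(2)})$ yields
\[
|S| \;\le\; n_1 n_2 \;-\; \tfrac{2}{(k-1)^2}(n_1-a_0^{(1)})(n_2-a_0^{(2)}) \;+\; O(\min\{n_1,n_2\}).
\]
This is at most $(1-2/k^2)n_1 n_2 + O(\min\{n_1,n_2\})$ precisely when $(1-a_0^{(1)}/n_1)(1-a_0^{(2)}/n_2)\ge(1-1/k)^2$, which holds in particular when $\ba_0$ lies in the corner rectangle $R_k:=[\lfloor n_1/k\rfloor]\times[\lfloor n_2/k\rfloor]$. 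So the induction closes whenever $S\cap R_k\ne\emptyset$, and if $S\subseteq\{x_1>n_1/k\}$ (or the symmetric set for $x_2$) then $|S|\le(1-1/k)n_1 n_2\le(1-2/k^2)n_1 n_2$ already, using $1-1/k\le 1-2/k^2$ for $k\ge 2$.

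The delicate case, and the main obstacle, is when $S$ avoids $R_k$ yet still contains both an element $\ba=(p,q)\in S$ with $p\le n_1/k$ (forcing $q>n_2/k$) and an element $\bb=(r,s)\in S$ with $s\le n_2/k$ (forcing $r>n_1/k$), so that $S$ lives entirely in the \emph{upper-left} and \emph{lower-right} strips. Using either $\ba$ or $\bb$ alone for the inductive reduction fails because in this geometry the product $(1-a_0^{(1)}/n_1)(1-a_0^{(2)}/n_2)$ necessarily drops below $(1-1/k)^2$. My plan is to exploit both elements simultaneously: $S$ is also $(k-2)$-sum $(\bn-\ba-\bb)$-free, so when $\bn-\ba-\bb\in\Z_{>0}^{2}$ the inductive hypothesis for $k-2$ applies to $S\cap[\bn-\ba-\bb]$ (treating $k=3$ separately, where $k-2=1$ gives just $\bn-\ba-\bb\notin S$), and when $p+r\ge n_1$ or $q+s\ge n_2$ the combined constraint forces every applicable element into a band of width $O(n_i/k)$ near the boundary, excluding an additional region of area $\Theta(n_1 n_2/k^2)$ beyond $R_k$. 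The principal technical hurdle is ensuring that the error terms from this two-element reduction and the various sub-cases combine cleanly into the uniform $O(1/\min\{n_1,n_2\})$ bound claimed by the theorem.
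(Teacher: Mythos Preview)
Your lower bound and the $k=2$ base case match the paper exactly, and your single-element reduction when $S\cap R_k\ne\emptyset$ is fine. The trouble is entirely in your ``delicate case,'' and there are two separate gaps.

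First, for $k\ge 4$: picking arbitrary $\ba=(p,q)$ and $\bb=(r,s)$ from the two strips gives you no upper bound on $q$ or $r$, so there is no reason for $(n_1-p-r)(n_2-q-s)\ge\bigl((k-2)/k\bigr)^2 n_1 n_2$; indeed $\bn-\ba-\bb$ may have a tiny or negative coordinate, and your ``band of width $O(n_i/k)$'' handwaving does not recover the deficit. The paper sidesteps this with a clean trick you are missing: rather than choosing two elements of $S$ directly, it asks whether $S\cap[\ba]$ is $2$-sum $\ba$-free for $\ba:=2\lfloor\bn/k\rfloor$. If it is, the $k=2$ bound applied to the sub-box $[\ba]$ already gives $|S|\le(1-2/k^2)|\bn|+O(M(\bn))$. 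If it is not, then there exist $\bx,\by\in S$ with $\bx+\by=\ba$ \emph{exactly}, so $\bn-\bx-\by=\bn-2\lfloor\bn/k\rfloor$ is a fixed point of size roughly $((k-2)/k)\bn$, and the $(k-2)$-induction closes perfectly. The point is that the dichotomy manufactures two elements with a \emph{controlled} sum.

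Second, and more seriously, $k=3$ really is a separate hard case that you have not addressed. Your reduction to $k-2=1$ excludes a single point $\bn-\ba-\bb$ from $S$, which is worthless. The paper treats $k=3$ as an independent base case (its Theorem on $\mu_3$): it fixes an extremal $\bx\in S\cap A$ with $x_2>n_2/3$, studies the line $\ell_\bx$ through $\bx$ and $(2n_1/3,n_2/3)$, splits according to whether the intercept $p$ of $\ell_\bx$ exceeds $n_2-x_2$, and in the harder sub-case combines the excluded triangle below $\ell_\bx$ (via Pick's theorem) with the $2$-sum bound on $[\bn-\bx]$; the resulting inequality boils down algebraically to $(3\alpha+3\beta-2)^2\ge 0$ where $\alpha=x_1/n_1$, $\beta=x_2/n_2$. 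Nothing in your outline approximates this argument, and without it the induction (which for $k\ge 4$ steps down by $2$) has no base at $k=3$.

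A minor point: your displayed inductive bound carries an $O(\min\{n_1,n_2\})$ error term, but it should be $O(\max\{n_1,n_2\})$; dividing by $|\bn|=n_1 n_2$ is what turns $\max$ into $1/\min$.
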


We first provide an example demonstrating the sharpness of Theorem~\ref{thm:main:stronger}.
In Subsection~\ref{subsec:induc} we show Theorem~\ref{thm:main:stronger}, whose proof is by induction on $k$, except the case when $k=3$, which we deal with in Subsection~\ref{subsec:k=3}.

\begin{remark}\label{rml:example}
Let $k\geq 2$ be a positive integer and let $\bn=(n_1,n_2)$ be a 2-dimensional integer lattice point where both $n_1$ and $n_2$ are sufficiently large.
The inequality $\nu_k(\bn)\ge \frac{k^2-2}{k^2}$ can be verified by considering the following set:
\[ S=\left\{ \bx=(x_1,x_2)\in [\bn]  \mid  n_2x_1+n_1x_2>\frac{2n_1n_2}{k}\right\}.\]
See Figure~\ref{fig1:example} for an illustration of $S$.

\begin{figure}[ht]
  \centering
  \includegraphics[width=8cm]{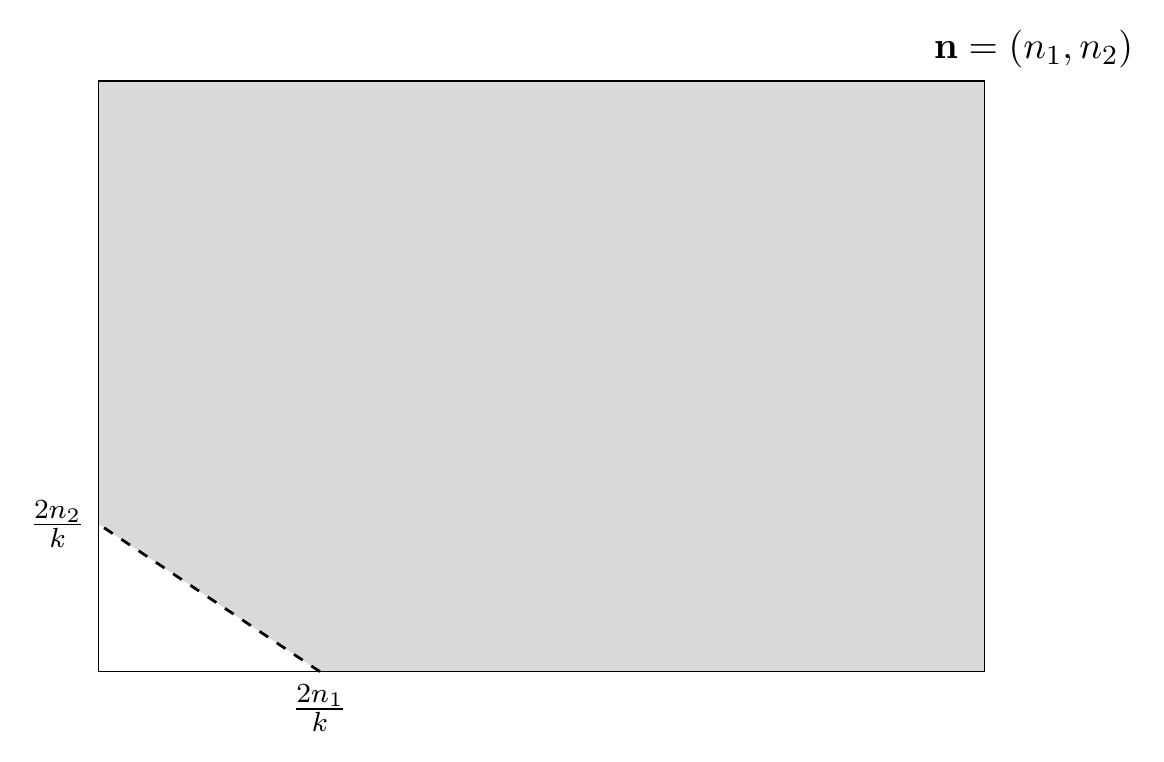}
  \caption{The shaded region corresponds to a $k$-sum $\bn$-free set. }\label{fig1:example}
\end{figure}

Suppose there are elements ${\ba}_1, \ldots, {\ba}_k$ in $S$ satisfying $\ba_1+\cdots+\ba_k=\bn$.
Let $\ba_i=(a_{i1},a_{i2})$ for each $i\in [k]$.
Then $a_{11}+\cdots+ a_{k1}=n_1$ and $a_{12}+\cdots+a_{k2}=n_2$.
Moreover, by the definition of $S$, we have  $n_2a_{i1}+n_1a_{i2}>\frac{2n_1n_2}{k}$ for each $i\in [k]$. By adding up the $k$ inequalities, each corresponding to one $\ba_i$,
we obtain
\[  n_2(a_{11}+\cdots+ a_{k1}) +n_1(a_{12}+\cdots+a_{k2}) >{2n_1n_2},\]
which is a contradiction since the left side is also $2n_1n_2$.
Hence,
\[\nu_k(\bn)\ge \frac{|S|}{|\bn|} \ge \frac{|\bn|-\frac{2n_1n_2}{k^2}}{|\bn|}=\frac{k^2-2}{k^2}.\]
\end{remark}

Before starting the proof, we introduce some notation that will be used throughout the remaining two subsections.
For ${\br}=(r_1,r_2) \in\R^2$,
let $m(\br):=\min\{r_1,r_2\}$ and $M(\br):=\max\{r_1,r_2\}$, and for a real number $\alpha$, let $\alpha\br=(\alpha r_1,\alpha r_2)$.
Note that $|\alpha\br|=\alpha^2|\br|$.
Also, let $\lfloor \br \rfloor$ and $\lceil \br \rceil$ denote the integer points $(\lfloor r_1 \rfloor, \lfloor r_2 \rfloor)$ and $(\lceil r_1 \rceil, \lceil r_2\rceil)$, respectively.
For ${\br}=(r_1,r_2)$ and $\br'=(r'_1,r'_2)$ in $\R^2$, let $\br \le \br'$ and $\br<\br'$ denote $r_i\le r'_i$ and $r_i<r'_i$, respectively, for each $i\in [2]$.

\subsection{Proof of Theorem~\ref{thm:main:stronger}}\label{subsec:induc}

In this subsection, we prove Theorem~\ref{thm:main:stronger}, except the case when $k=3$, whose proof is in Subsection~\ref{subsec:k=3}.
To prove Theorem~\ref{thm:main:stronger}, it is sufficient to prove that for every $k$-sum $\bn$-free subset $S$ of $[\bn]$, the following:
\begin{eqnarray} \label{eq:S}
|S|\le \left(\frac{k^2-2}{k^2}\right)|\bn| + O(M(\bn)).
\end{eqnarray}
To see why, suppose that $|S|\le \alpha|\bn| + cM(\bn)$ for  a $k$-sum $\bn$-free set $S$ of $[\bn]$ and some constants $\alpha$ and $c$.
Since $|\bn|=M(\bn)m(\bn)$,
\[  \frac{|S|}{|\bn|} \le \alpha + \frac{cM(\bn)}{M(\bn)m(\bn)}=\alpha +\frac{c}{m(\bn)},\]
which implies that
$\nu_{k}(\bn)\le  \frac{k^2-2}{k^2}+ O\left(\frac{1}{m(\bn)}\right)$.
Tightness is shown by the example in Remark~\ref{rml:example}.

In the following, let $S$ be a maximum $k$-sum $\bn$-free set of $[\bn]$.
We prove \eqref{eq:S} by induction on $k$, with  two base cases, $k=2$ and $k=3$.
When $k=2$, since both integer lattice points $\bx$ and $\bn-\bx$ cannot both be in $S$, the following holds:
\begin{eqnarray} \label{eq:k=2}
\mu_{2,\bn}(\bn)= \left\lfloor\frac{|\bn|+n_1+n_2-1}{2}\right\rfloor \le \frac{|\bn|}{2}+M(\bn).
\end{eqnarray}
Thus, \eqref{eq:S} is true when $k=2$.
When $k=3$, Theorem~\ref{thm:3-sum-main}, whose proof is postponed to Subsection~\ref{subsec:k=3}, implies that \eqref{eq:S} is true when $k=3$.

\begin{theorem}\label{thm:3-sum-main}
Let $\bn=(n_1,n_2)\in\Z_{>0}^2$.
As both $n_1$ and $n_2$ go to infinity, 
\[ \mu_3(\bn)\le \frac{7}{9}|\bn|+O(M(\bn)).\]
\end{theorem}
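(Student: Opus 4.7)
The plan is to show that every 3-sum $\bn$-free set $S\subseteq[\bn]$ satisfies $|S|\le \tfrac{7}{9}|\bn|+O(M(\bn))$. Introduce the weight $w(\bx):=x_1/n_1+x_2/n_2$: then $w(\bn)=2$ and every triple $\ba_1+\ba_2+\ba_3=\bn$ satisfies $w(\ba_1)+w(\ba_2)+w(\ba_3)=2$. The extremal construction in Remark~\ref{rml:example} is $B:=\{\bx\in[\bn]:w(\bx)>2/3\}$, with $|B|=\tfrac{7}{9}|\bn|+O(M(\bn))$; let $A:=[\bn]\setminus B$.

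First, I would do a ``lightest-element'' reduction: pick $\bm\in S$ minimizing $w$ and write $w^*=w(\bm)$. Because $\bm\in S$, any $\ba,\bb\in S$ with $\ba+\bb=\bn-\bm$ would form a forbidden triple $\bm+\ba+\bb=\bn$, so $S\cap[\bn-\bm]$ is $2$-sum $(\bn-\bm)$-free and \eqref{eq:k=2} gives $|S\cap[\bn-\bm]|\le\tfrac{|\bn-\bm|}{2}+M(\bn-\bm)$. Adding the trivial bound $|S\setminus[\bn-\bm]|\le n_1m_2+n_2m_1$ yields, after a short computation,
\[
|S|\le \tfrac{|\bn|}{2}(1+w^*)+O(M(\bn)),
\]
which already matches the target whenever $w^*\le\tfrac{5}{9}$. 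Thus one may assume $S\subseteq\{w>\tfrac{5}{9}\}$.

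For the remaining case, the goal is to inject $S\cap A$ into $B\setminus S$ with defect only $O(M(\bn))$: then $|S|=|S\cap A|+|S\cap B|\le|B|+O(M(\bn))=\tfrac{7}{9}|\bn|+O(M(\bn))$. The main tool is the triple $\ba+\ba+(\bn-2\ba)=\bn$: for $\ba\in S\cap A$ with $2\ba\le\bn-\mathbf{1}$, the doubling map $\phi(\ba)=\bn-2\ba$ lies in $B\setminus S$ (since $w(\phi(\ba))=2-2w(\ba)>2/3$) and is injective. For the exceptional set $E:=A\cap\{2\ba\not\le\bn-\mathbf{1}\}$ --- two thin triangles abutting the coordinate edges --- I would invoke the triple $\ba+\lceil(\bn-\ba)/2\rceil+\lfloor(\bn-\ba)/2\rfloor=\bn$; provided $w(\ba)<2/3-O(1/m(\bn))$, both summands lie in $B$, so at least one of them lies in $B\setminus S$, yielding an auxiliary injection on $E$.

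The main obstacle is unambiguously this second step. The exceptional set $E$ has area $\Theta(|\bn|)$ rather than $O(M(\bn))$, so the doubling map $\phi$ alone cannot carry the injection; and naive auxiliary maps on $E$ collide with the image of $\phi$ on regions of area $\Theta(|\bn|)$. Combining the one-element reduction with the trivial containment $|S|\le|\bn|(1-(w^*)^2/2)+O(M(\bn))$ only gives $|S|\lesssim 0.81\,|\bn|$ at the crossover $w^*=(\sqrt{5}-1)/2$, still well above $\tfrac{7}{9}\approx 0.778$. The delicate work --- and what I expect to be the real content of the proof --- is to couple $\phi$ with the auxiliary map on $E$ through parity-sensitive bookkeeping and sublattice overlap counts, together with absorbing the boundary strip $\{2/3-O(1/m(\bn))<w\le 2/3\}\cap A$ (of area $O(M(\bn))$) into the error term, so that the two injections chain into a single near-injection from $S\cap A$ into $B\setminus S$.
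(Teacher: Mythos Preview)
Your first step---reducing via the lightest element $\bm$ to obtain $|S|\le \tfrac{|\bn|}{2}(1+w^*)+O(M(\bn))$---is correct, and in fact gives a slightly cleaner threshold ($w^*\le 5/9$) than the paper's corresponding step, which only disposes directly of the case where some $\bx\in S$ satisfies $\bx\le\bn/3$. The gap, as you yourself diagnose, is entirely in the second step: the exceptional set $E$ on which the doubling map $\phi(\ba)=\bn-2\ba$ exits the grid really does have area $\Theta(|\bn|)$, and the auxiliary map $\ba\mapsto\lfloor(\bn-\ba)/2\rfloor$ collides with $\phi$ on a set of the same order. No injection scheme of the type you sketch will close this without a further structural input, and your ``parity-sensitive bookkeeping'' remains a hope rather than an argument.

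The paper avoids the injection idea altogether. After the reduction step, it selects a single \emph{extremal} point $\bx\in S\cap A$: by symmetry $x_2>n_2/3$, $x_1\le n_1/3$, and---this is the crucial choice---$\bx$ is taken so that $S\cap A$ contains no point below the line $\ell_\bx$ through $\bx$ and $(2n_1/3,\,n_2/3)$ in the upper-left strip. One then splits on whether the $y$-intercept $p$ of $\ell_\bx$ lies below or above $n_2-x_2$. If $p<n_2-x_2$, a short algebraic manipulation shows $|\bn|-\tfrac{1}{2}|\bn-\bx|<\tfrac{7}{9}|\bn|$, so the $2$-sum reduction on $[\bn-\bx]$ already suffices. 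If $p\ge n_2-x_2$, the extremality of $\bx$ forces an entire triangular region $R$ (bounded by $\ell_\bx$, the line $x_1=0$, and the line $x_2=n_2-x_2$) to be disjoint from $S$; subtracting $|R|$ and reducing to the inequality $(3\alpha+3\beta-2)^2\ge 0$ with $\alpha=x_1/n_1$, $\beta=x_2/n_2$ finishes the proof. The idea you are missing is precisely this extremal choice of $\bx$, which converts the global injection problem into a single explicitly computable forbidden region.
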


For the induction step, suppose $k\geq 4$. 
Let $\ba=2\left\lfloor \frac{\bn}{k} \right\rfloor$.
Suppose that $S\cap [\ba]$  is a $2$-sum $\ba$-free set of $[\ba]$.
By~\eqref{eq:k=2}, 
we have $|S \cap [\ba]| \le \frac{1}{2} |\ba| +M(\ba)$.
Then,
\[ |S|\le |\bn|-|\ba| + \frac{1}{2} |\ba| + M(\ba) = |\bn|-\frac{1}{2}|\ba| + M(\ba)\le
|\bn|-\frac{1}{2}|\ba| + M(\bn).\]
Also,
  \begin{eqnarray*}
\frac{1}{4}|\ba|=\left|\left\lfloor \frac{\bn}{k}\right\rfloor\right| &\ge&
\frac{M(\bn)-(k-1)}{k} \cdot \frac{m(\bn)-(k-1)}{k} \\
&\ge&
\frac{M(\bn)m(\bn)}{k^2}- \frac{2(k-1)M(\bn)}{k^2}=
\frac{|\bn|}{k^2}-\frac{2(k-1)M(\bn)}{k^2}
.\end{eqnarray*}
Hence,
\[ |S| \le |\bn|-\frac{2|\bn|}{k^2}+\frac{4(k-1)}{k^2} \cdot M(\bn)+M(\bn) = \left(\frac{k^2-2}{k^2}\right)|\bn| +
\left(1+\frac{4(k-1)}{k^2}\right) M(\bn),\]
which implies that \eqref{eq:S} holds.

Suppose that $S\cap [\ba]$ is not a $2$-sum $\ba$-free set of $[\ba]$.
Then, there are two elements $\bx$ and $\by$ in $S\cap [\ba]$ such that $\bx+\by=\ba$.
Let $\bb=\bn-\ba$, and now we consider $S'=S\cap [\bb]$.
Now, $S'$ is a $(k-2)$-sum $\bb$-free set.
Since $k\ge 4$, by induction hypothesis, we know
\[|S'|\le \frac{(k-2)^2-2}{(k-2)^2}|\bb|  + O\left(M(\bb)\right)\le
\frac{(k-2)^2-2}{(k-2)^2}|\bb|  + cM(\bb)\]
for some constant $c$ not depending on $\bb$.
Since $|S| \le |\bn|-|\bb|+|S'|$, we obtain
\[ |S| \le |\bn|-\frac{2}{(k-2)^2} |\bb| +     cM(\bb).\]
By the definitions of $\ba$ and $\bb$, we have
$|\bb|= \left|\bn-2\left\lfloor\frac{\bn}{k}\right\rfloor\right|\ge \frac{(k-2)^2}{k^2}|\bn|$.
It follows that
\[ |S|\le |\bn| -\frac{2}{k^2}  |\bn| +  O(M(\bn)).\]

\subsection{Proof of Theorem~\ref{thm:3-sum-main}}\label{subsec:k=3}

In this Subsection, we prove Theorem~\ref{thm:3-sum-main}, which is the crucial part of the proof.

Assume $S$ is a $3$-sum $\bn$-free set of $[\bn]$.
For simplicity, let
\[A=\left\{ (x_1,x_2)\in[\bn]\mid n_2x_1+n_1x_2<\frac{2n_1n_2}{3}\right\}.\]
As shown in Remark~\ref{rml:example}, if  $A\cap S=\emptyset$, namely, $S$ belongs to the shaded region of Figure~\ref{fig1:example}, then we have the desired conclusion.
Thus, we may assume $A\cap S \neq\emptyset$ in the following.

For a 2-dimensional integer lattice point $\bx$, let \[S_{\bx}=S\cap [\bn-\bx].\]
We often use the fact that if $\bx\in S$, then $S_{\bx}$ is a 2-sum $(\bn-\bx)$-free set.
By \eqref{eq:k=2}, we know
$|S_{\bx}| \le \frac{|\bn-\bx|}{2}+ M{(\bn-\bx)} $.
Since $M(\bn-\bx)\leq M(\bn)$, we obtain
\begin{eqnarray}\label{eq:Sx}
|S_{\bx}| \le \frac{|\bn-\bx|}{2}+ M({\bn}).
\end{eqnarray}

\medskip

If  $S$ contains an element $\bx$ where $\bx \le \frac{\bn}{3}$,
which is equivalent to $\bn-\bx\geq \frac{2}{3}\bn$,
then we know
$|\bn-\bx|\geq\frac{4}{9}|\bn|$.
Since $|S|\leq |\bn|-|\bn-\bx|+|S_{\bx}|$, by~\eqref{eq:Sx}, we obtain
\[ |S| \le  |\bn|-\frac{|\bn-\bx|}{2}+M(\bn)\leq\frac{7}{9}|\bn|+ M(\bn),\]
which is the desired conclusion.

\medskip

Now suppose  $S$ has no element $\bx$ where $\bx \le \frac{\bn}{3}$.
For convenience, let $\ba=\left(\frac{n_1}{3},\frac{2n_2}{3}\right)$, $\bb=\left(\frac{2n_1}{3},\frac{n_2}{3}\right)$, and $\bc=\left(\frac{n_1}{3},\frac{n_2}{3}\right)$.
See Figure~\ref{fig:caseii}.

\begin{figure}[b!]
  \centering
  \includegraphics[width=10cm]{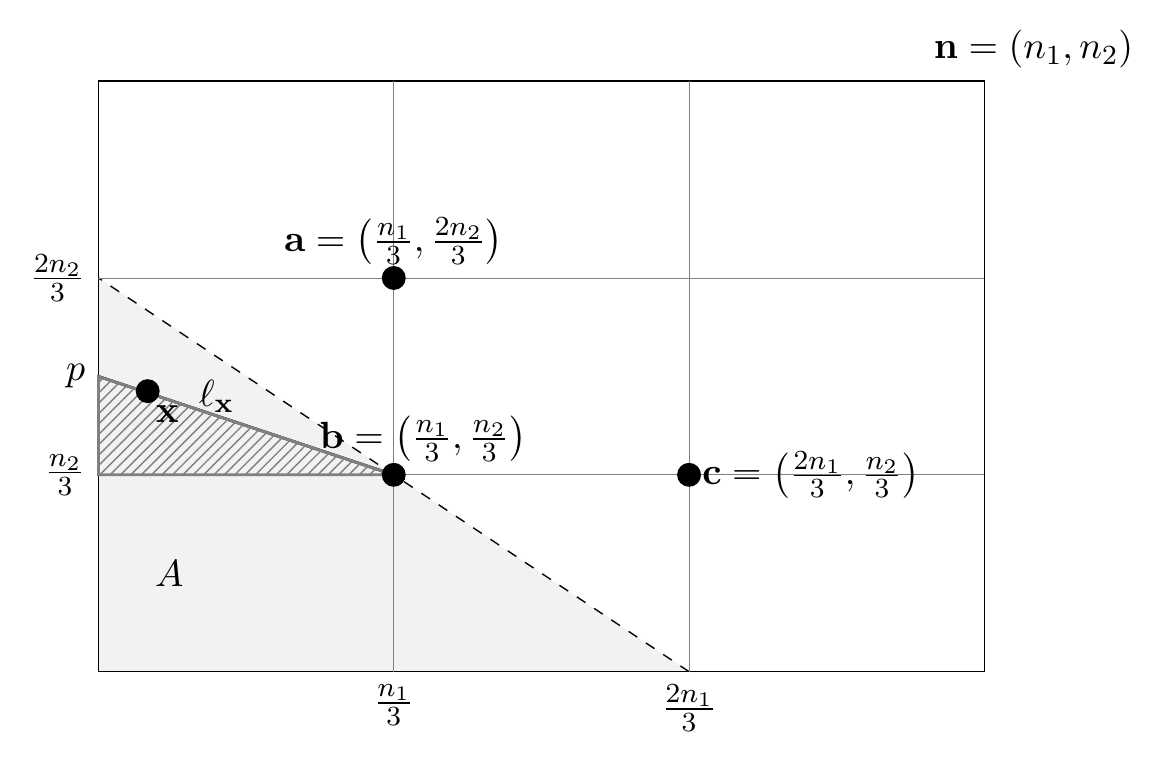}
  \caption{
  $A$ is the shaded region and no element of $S$ is in the hatched region.}\label{fig:caseii}
\end{figure}

Since $A\cap S\neq \emptyset$, we know $S$  contains some point in $A\setminus \left\{\bx \in [\bn] \mid \bx \le \frac{\bn}{3}\right\}$.
By symmetry, we may assume that there exists $\bx=(x_1,x_2)\in S\cap A$ where $x_2 >\frac{n_2}{3}$ and $0< x_1 \le \frac{n_1}{3}$.
Let $\ell_{\bx}$ be the line defined by the two points $\bx$ and $\bb$.
We may further assume that $S$ does not contain a point of $A$ below $\ell_{\bx}$ where the 2nd coordinate is greater than  $\frac{n_2}{3}$;
this is the hatched region of Figure~\ref{fig:caseii}.
Let $p$ be the 2nd coordinate of the intercept of the line $\ell_{\bx}$ and the vertical line passing through the origin, that is,
\[
p=\frac{n_1x_2-n_2x_1}{n_1-3x_1}.
\]

We consider two cases, depending on the larger value of $p$ and the 2nd coordinate of $\bn-\bx$.

\smallskip

\noindent\textbf{Case (i)}:
Suppose
$p<n_2-x_2$.
Since $x_1<\frac{n_1}{3}$ is equivalent to $n_1-3x_1>0$, it follows that
$p<n_2-x_2$ is equivalent to
\begin{eqnarray}\label{eq:x1x2}
-3x_1x_2 < n_1n_2-2n_1x_2-2n_2x_1.
\end{eqnarray}
Now,
\begin{eqnarray*}
|\bn|-\frac{|\bn-\bx|}{2}&=&\frac{n_1n_2+(n_2x_1+n_1x_2)-x_1x_2}{2} \\
&<& \frac{3n_1n_2+3(n_2x_1+n_1x_2)+(n_1n_2-2n_1x_2-2n_2x_1) }{6}\\
&=& \frac{4n_1n_2 +(n_2x_1+n_1x_2) }{6}\\
&<&  \frac{4n_1n_2 +\frac{2n_1n_2}{3}}{6}=\frac{7}{9}|\bn|
\end{eqnarray*}
where the first inequality comes from \eqref{eq:x1x2} and the second inequality follows from the fact that $\bx \in A$.
Thus, since $|S|\leq |\bn|-|\bn-\bx|+|S_{\bx}|$,  by~\eqref{eq:Sx}, we obtain
\[ |S| \le  |\bn|-\frac{|\bn-\bx|}{2}+M(\bn)<\frac{7}{9}|\bn|+ M(\bn),\]
which is the desired conclusion.

\smallskip

\noindent\textbf{Case (ii)}:
Now suppose
$p\ge n_2-x_2$.

This means that $S$ contains no integer lattice points in the following set:
\[R:=\{(z_1, z_2)\in [\bn]\mid z_1>0, z_2>n_2-x_2,\mbox{ and $(z_1, z_2)$ is below the line $\ell_{\bx}$}\}\]
See Figure~\ref{fig:LastCase} for an illustration.
\begin{figure}[h!]
  \centering
  \includegraphics[width=10cm]{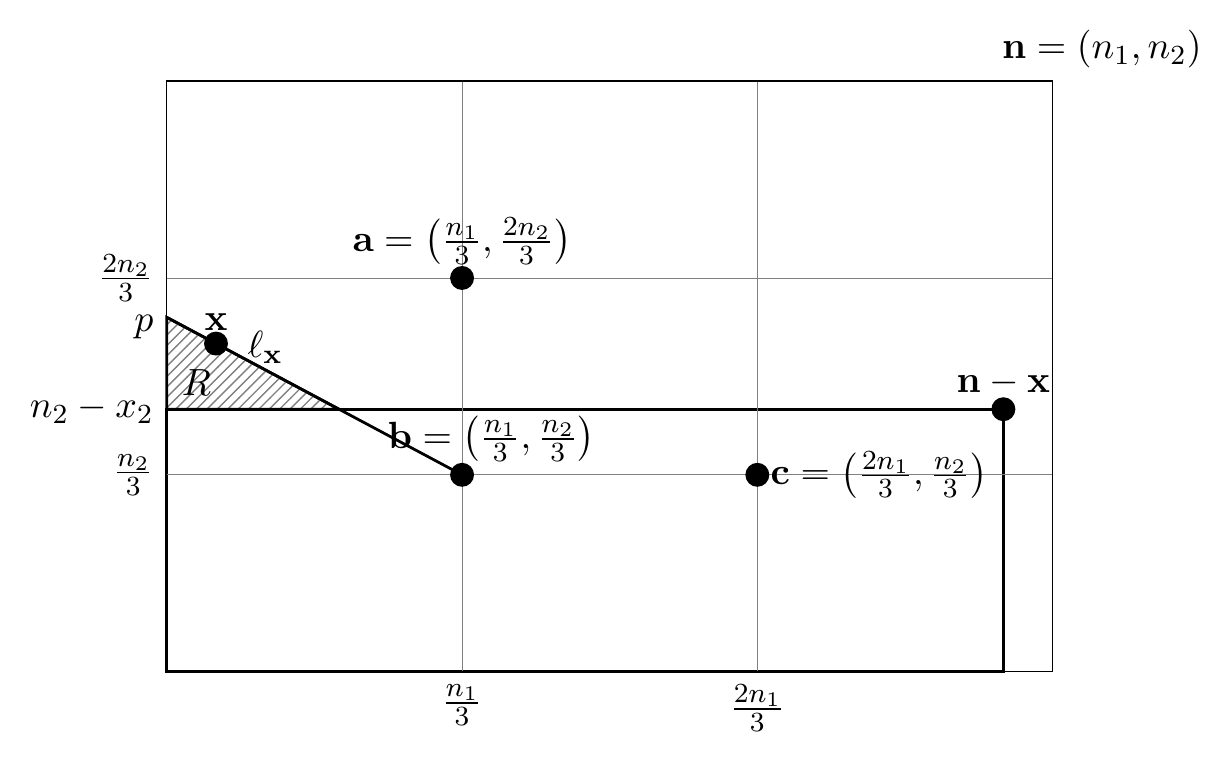}
  \caption{An illustration for Case (ii), when $p\ge n_2-x_2$.}\label{fig:LastCase}
\end{figure}
In other words, $R \cap S = \emptyset$, and so
\[|S| \le |\bn|-|\bn-\bx|-
|R|
+|S_{\bx}|.\]

By \eqref{eq:Sx}, we obtain
\begin{eqnarray*}
  |S|
  &\leq&
|\bn|-\frac{|\bn-\bx|}{2}-|R|
+M(\bn).
\end{eqnarray*}
By Pick's Theorem, the number of integer lattice points in the interior of a triangular region $T$ is exactly $A-\frac{B}{2}+1$ where $A$ is the area of $T$ and $B$ is the number of integer lattice points on the boundary of $T$.
Let $R'$ denote the triangular region corresponding to $R$.
Since the slope of $\ell_{\bx}$ is $-\frac{3x_2-n_2}{n_1-3x_1}$ and the height of $R'$ is $p-n_2+x_2$,
the length of the base of $R'$ is $\frac{(p-n_2+x_2)(n_1-3x_1)}{3x_2-n_2}$.
Thus, the area of $R'$ is $\frac{(p-n_2+x_2)^2(n_1-3x_1)}{2(3x_2-n_2)}$.
Note that both $\frac{1}{|\bn|}(p-n_2+x_2)$ and $\frac{1}{|\bn|}\cdot\frac{(p-n_2+x_2)(n_1-3x_1)}{3x_2-n_2}$ go to 0 as $n_1, n_2$ go to infinity.
Therefore, in order to prove our theorem, it suffices to show that \begin{eqnarray}\label{eq:case-ii}
&&\frac{1}{|\bn|}
\left(|\bn|-\frac{|\bn-\bx|}{2}-\frac{(p-n_2+x_2)^2(n_1-3x_1)}{2(3x_2-n_2)} \right)\le \frac{7}{9}.\end{eqnarray}
Let $$\alpha=\frac{x_1}{n_1} \quad  \text{and}\quad  \beta=\frac{x_2}{n_2}.$$
Then, the left side of \eqref{eq:case-ii} is equal to
\[
1-\frac{(1-\alpha)(1-\beta)}{2}-\frac{(2\alpha+2\beta-1-3\alpha\beta)^2}{2(1-3\alpha)(3\beta-1)}.\]
Suppose to the contrary that \eqref{eq:case-ii} does not hold, that is,
\[1-\frac{(1-\alpha)(1-\beta)}{2}-\frac{(2\alpha+2\beta-1-3\alpha\beta)^2}{2(1-3\alpha)(3\beta-1)}> \frac{7}{9},\]
or
\[\frac{2}{9}>\frac{(1-\alpha)(1-\beta)}{2}+\frac{(2\alpha+2\beta-1-3\alpha\beta)^2}{2(1-3\alpha)(3\beta-1)}.\]

Note that $(1-3\alpha)(1-3\beta)$ is negative since the slope of $\ell_\bx$ is negative.  
Now, by multiplying $2(1-3\alpha)(1-3\beta)$ to both sides, we obtain
\[\frac{4(1-3\alpha)(1-3\beta)}{9}<(1-\alpha)(1-\beta)(1-3\alpha)(1-3\beta)
-(2\alpha+2\beta-1-3\alpha\beta)^2.\]
The right side of the above is equal to
\begin{eqnarray*}
&&(1-\alpha-\beta+\alpha\beta)(1-3\alpha-3\beta+9\alpha\beta)\\
&&-(4\alpha^2+4\beta^2+1+9\alpha^2\beta^2+8\alpha\beta-4\alpha-4\beta-12\alpha^2\beta-12\alpha\beta^2+6\alpha\beta)\\
&=&-\alpha^2-\beta^2+2\alpha\beta.
\end{eqnarray*}
Thus,
\[ \frac{4(1-3\alpha-3\beta+9\alpha\beta)}{9}<-\alpha^2-\beta^2+2\alpha\beta, \]
or \[9\alpha^2+9\beta^2+18\alpha\beta-12\alpha-12\beta+4<0.\]
This is equivalent to $(3\alpha+3\beta-2)^2<0$, which is a contradiction.
This completes the proof.

\section{Remarks}\label{sec:remarks}

We found the maximum density of a $k$-sum $\bn$-free set in the 2-dimensional integer lattice for all positive integers $k$ and all 2-dimensional integer lattice points $\bn$;
this is equivalent to an $\L$-free set where $\L$ is an equation of the form $\bx_1+\cdots+\bx_k=\bn$.
Several fundamental questions remain unsolved regarding this topic, and we list a few.

\begin{problem} Determine the minimum real number $\alpha$ such that for a $k$-sum $(n,n)$-free set $S$,
$|S|\ge \alpha n^2$  is a subset of the extremal example in Remark~\ref{rml:example}.
\end{problem}

\begin{problem}
What is the number of $k$-sum $(n,n)$-free sets in $[n]\times [n]$? Among them, how many are maximal?
\end{problem}

Of course it would be interesting to obtain a  higher dimension analogue to the question of $k$-sum $\bn$-free sets.

\begin{problem}
For an integer $d\geq 3$, determine $\nu_{k}( \bn)$ for a $d$-dimensional integer lattice point $\bn$ in $\mathbb{Z}_{>0}^d$.
\end{problem}

In a slightly different avenue, it would be interesting to consider a more general linear equation $\L$.
However, we do not have a complete answer even for the 1-dimensional case regarding this question.
That is, determine the maximum size of an $\L$-free set of $[n]$, where $\L:a_1x_1+\cdots+a_kx_k=b$ for some integer coefficients $a_i$ and $b$.
It was recently revealed that the problem is $\sharp$P-complete, see~\cite{complexity2018}.

\section*{Acknowledgements}\label{ackref}
The authors thank Hong Liu at the University of Warwick for drawing our attention to this area.
This work was done during the 3rd Korean Early Career Researcher Workshop in Combinatorics.

\end{document}